\newcommand\Area{\mathrm{Area}}
\newcommand\C{\mathbb{C}}
\newcommand\N{\mathbb{N}}
\newcommand\ol{\overline}
\newcommand\Op{\mathrm{Op}}
\newcommand\R{\mathbb{R}}
\newcommand\Sb{\mathbb{S}}
\newcommand\Span{\mathrm{span}}
\newcommand\Vol{\mathrm{Vol}}
\newcommand\Z{\mathbb{Z}}
\theoremstyle{plain}
\newtheorem{thm}{Theorem}
\newtheorem{lemma}[thm]{Lemma}
\theoremstyle{remark}
\newtheorem*{defn}{\textbf{Definition}}
\newtheorem*{ex}{\textbf{Example}}
\numberwithin{equation}{section}
\title{Semiclassical measure of the spherical harmonics by Bourgain on $\Sb^3$}
\author{Xiaolong Han}
\email{xiaolong.han@csun.edu}
\address{Department of Mathematics, California State University, Northridge, CA 91330, USA}
\subjclass[2010]{33C55, 35P20}
\keywords{Spherical harmonics, Rudin-Shapiro sequences, equidistribution, semiclassical measure, Clifford tori}
\thanks{} 
\begin{document}
\maketitle

\begin{abstract}
Bourgain \cite{B1} used the Rudin-Shapiro sequences to construct a basis of uniformly bounded holomorphic functions on the unit sphere in $\C^2$. They are also spherical harmonics (i.e., Laplacian eigenfunctions) on $\Sb^3\subset\R^4$. In this paper, we prove that these functions tend to be equidistributed on $\Sb^3$, based on an estimate of the auto-correlation of the Rudin-Shapiro sequences. Moreover, we identify the semiclassical measure associated to these spherical harmonics by the singular measure supported on the family of Clifford tori in $\Sb^3$. In particular, this demonstrates a new localization pattern in the study of Laplacian eigenfunctions. 
\end{abstract}

\section{Introduction}
Bourgain \cite{B1} proved the existence of a uniformly bounded holomorphic basis on the unit sphere $\Sb_\C^2=\{(z,w)\in\C^2:|z|^2+|w|^2=1\}$ via an explicit construction using the Rudin-Shapiro sequences (c.f., Section \ref{sec:RS}): 
\begin{thm}[Spherical harmonics by Bourgain]
Let $N\in\N$ and $\{\sigma_j\}_{j=0}^N$ be a Rudin-Shapiro sequence. For $k=0,...,N$, define
\begin{equation}\label{eq:P}
P_{N,k}(z,w)=\frac{1}{\sqrt{N+1}}\sum_{j=0}^N\sigma_je^{\frac{2\pi ijk}{N+1}}\frac{z^jw^{N-j}}{\left\|z^jw^{N-j}\right\|_{L^2(\Sb^3)}}.
\end{equation}
Then $\{P_{N,k}\}_{k=0}^N$ is an orthonormal basis in the space $\Span\{z^jw^{N-j}:j=0,...,N\}$, and there is an absolute constant $C>0$ such that
$$\sup_{(z,w)\in\Sb_\C^2}\left|P_{N,k}(z,w)\right|\le C\quad\text{for all }N\in\N\text{ and }k=0,...,N.$$
\end{thm}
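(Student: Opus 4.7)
Parametrize $\Sb^3$ by $(z,w)=(e^{i\alpha}\cos\theta,\,e^{i\beta}\sin\theta)$ with $\theta\in[0,\pi/2]$ and $\alpha,\beta\in[0,2\pi)$, so that the Haar measure reads $\sin\theta\cos\theta\,d\theta\,d\alpha\,d\beta$. The monomials $\{z^jw^{N-j}\}_{j=0}^N$ are pairwise orthogonal, since the angular factor $e^{i(j-j')(\alpha-\beta)}$ averages to zero unless $j=j'$, and the beta integral yields $c_j^2=2\pi^2\,j!(N-j)!/(N+1)!$. Consequently $\{z^jw^{N-j}/c_j\}$ is an orthonormal basis of the $(N+1)$-dimensional space $\Span\{z^jw^{N-j}\}$, and (\ref{eq:P}) expresses $P_{N,k}$ as its image under the matrix $U_{kj}=\sigma_j e^{2\pi ijk/(N+1)}/\sqrt{N+1}$. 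Since $\sigma_j=\pm1$, this matrix factors as a diagonal sign matrix times the unitary discrete Fourier matrix; hence $U$ is unitary, and $\{P_{N,k}\}_{k=0}^N$ is therefore an orthonormal basis of the same span.

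For the sup-norm bound, substituting $c_j$ into (\ref{eq:P}) and pulling out the common phase $e^{iN\beta}$ gives
\begin{equation*}
|P_{N,k}(z,w)|=\frac{1}{\pi\sqrt{2}}\,|Q(\theta,\phi)|,\qquad Q(\theta,\phi):=\sum_{j=0}^N\sigma_j\sqrt{\tbinom{N}{j}}\cos^j\theta\sin^{N-j}\theta\,e^{ij\phi},
\end{equation*}
where $\phi=\alpha-\beta+2\pi k/(N+1)$ ranges freely in $[0,2\pi)$. The claim thus reduces to a uniform bound $|Q(\theta,\phi)|\leq C'$. Setting $b_j(\theta):=\sqrt{\binom{N}{j}}\cos^j\theta\sin^{N-j}\theta$, Parseval already gives $\int_0^{2\pi}|Q|^2\,d\phi/(2\pi)=\sum_j b_j^2=1$ (the binomial identity), so an $O(1)$ pointwise bound is the best possible scaling and must arise from cancellation among the $\sigma_j$.

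The main obstacle is this pointwise bound on $Q$, and this is where Bourgain's choice of the Rudin-Shapiro signs becomes essential. Expanding
\begin{equation*}
|Q(\theta,\phi)|^2=1+\sum_{m\neq 0}e^{im\phi}\,A_m(\theta),\qquad A_m(\theta):=\sum_j\sigma_j\sigma_{j+m}\,b_j(\theta)\,b_{j+m}(\theta),
\end{equation*}
reduces the task to showing $\sum_{m\neq 0}|A_m(\theta)|=O(1)$ uniformly in $\theta$. The profile $b_j(\theta)b_{j+m}(\theta)$ is a Gaussian-like bell concentrated at $j\approx N\cos^2\theta$ of width $\sim\sqrt{N}\sin\theta\cos\theta$, and the trivial estimate $|A_m|\leq 1$ is far too weak. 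I would invoke the Rudin-Shapiro autocorrelation estimate highlighted in the abstract: the recursion $\sigma_{2j}=\sigma_j$, $\sigma_{2j+1}=(-1)^j\sigma_j$ forces square-root cancellation $|\sum_{j\in I}\sigma_j\sigma_{j+m}|\lesssim\sqrt{|I|}$ on arbitrary intervals $I$. A dyadic decomposition of $\{0,\dots,N\}$ into intervals on which $b_jb_{j+m}$ is essentially constant, combined with Abel summation to trade weight differences for partial sums of sign products, and the $\sqrt{|I|}$-cancellation applied block by block, should yield enough decay of $|A_m(\theta)|$ in $m$ to sum to an absolute constant. Balancing the block size against the Gaussian width, and controlling the tails where $|m|$ exceeds that width, is the delicate part; producing a constant $C$ truly independent of $N$ is the main technical hurdle.
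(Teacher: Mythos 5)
Your first half is fine, and in fact the paper never proves this theorem at all: it is quoted from Bourgain \cite{B1}, so the only meaningful comparison is with Bourgain's argument. The orthonormality part of your proposal is complete and correct: the monomials $z^jw^{N-j}$ are pairwise orthogonal, your norm computation agrees with \eqref{eq:norm} (up to the harmless factor coming from your unnormalized measure), and the coefficient matrix is a diagonal sign matrix times the unitary DFT matrix, hence unitary. Your reduction of the sup-norm bound to a uniform bound on $Q(\theta,\phi)=\sum_j\sigma_j\sqrt{\binom{N}{j}}\cos^j\theta\sin^{N-j}\theta\,e^{ij\phi}$ is also correct.

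The boundedness half, however, has a genuine gap --- you acknowledge it yourself (``the main technical hurdle'') --- and the route you sketch would not close it. First, the inequality you want to invoke, $\bigl|\sum_{j\in I}\sigma_j\sigma_{j+m}\bigr|\lesssim\sqrt{|I|}$ on arbitrary intervals, is not a known property of Rudin--Shapiro sequences: the autocorrelation bounds of \cite{ACDES} (the paper's Theorem \ref{thm:auto}) only give exponent $c_0<0.74$, and that reference also shows the maximal autocorrelation genuinely exceeds square-root size for some shifts, so your key cancellation input is false, not merely unproved. Second, even granting ideal square-root cancellation, your reduction of $\|Q\|_\infty^2=O(1)$ to $\sum_{m\neq0}|A_m(\theta)|=O(1)$ is too lossy: with $W\sim\sqrt{N}\sin\theta\cos\theta$ the width of the bell $b_jb_{j+m}$, there are $\sim W$ relevant shifts $m$, and Abel summation against a weight of height $\sim W^{-1}$ gives at best $|A_m|\lesssim W^{-1/2}$ each, so the method tops out at $\sum_{m\neq0}|A_m|\lesssim W^{1/2}\sim N^{1/4}$; the triangle inequality has already discarded the $\phi$-oscillation across different $m$ where the remaining cancellation must live. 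The mechanism that actually works (Bourgain's) never squares $Q$ and never uses autocorrelations: it uses the flatness of Rudin--Shapiro \emph{exponential sums} on intervals, $\sup_t\bigl|\sum_{j\in I}\sigma_je^{ijt}\bigr|\le C\sqrt{|I|}$, which follows from the defining recursion via $|P_m|^2+|Q_m|^2=2^{m+1}$. One then splits $\{0,\dots,N\}$ into blocks of length $\sim W$, sums by parts within each block against the essentially monotone weight $b_j(\theta)$ (whose maximum is $\sim W^{-1/2}$), getting $O(1)$ per block near the center $j\approx N\cos^2\theta$, and uses the sub-Gaussian decay of $b_j$ to sum the remaining blocks geometrically. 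Note that the autocorrelation estimate of Theorem \ref{thm:auto} is what the paper uses in Case 2 of the proof of Theorem \ref{thm:sc}, where an extra factor $N^{-1}$ from the normalization is available; for the pointwise bound here no such gain exists, which is precisely why the autocorrelation route cannot substitute for the flatness of the exponential sums.
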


These holomorphic functions $P_{N,k}$ are also spherical harmonics of degree $N$ on 
$$\Sb^3=\{q=(x_1,y_1,x_2,y_2)\in\R^4:|x_1|^2+|y_1|^2+|x_2|^2+|y_2|^2=1\},$$
that is, they are eigenfunctions of the Laplacian $\Delta_{\Sb^3}$ on $\Sb^3$ (equipped with the round metric):
\begin{equation}\label{eq:efn}
\Delta_{\Sb^3}P_{N,k}=-N(N+2)P_{N,k}.
\end{equation}
In this paper, we study the distribution of these spherical harmonics $P_{N,k}$ by Bourgain. Firstly, we show that they tend to be equidistributed on $\Sb^3$ as $N\to\infty$.

\begin{thm}[Equidistribution of spherical harmonics by Bourgain]\label{thm:equid}
Let $f\in C^\infty(\Sb^3)$. Then
$$\lim_{N\to\infty}\int_{\Sb^3}f\left|P_{N,k}\right|^2\,d\Vol=\int_{\Sb^3}f\,d\Vol,$$
where $d\Vol$ is the Riemannian volume form on $\Sb^3$ normalized so that $\Vol(\Sb^3)=1$.
\end{thm}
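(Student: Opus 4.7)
The plan is to exploit the $\Ur(1) \times \Ur(1)$ symmetry (independent rotations of $z$ and $w$) of Bourgain's construction to reduce the equidistribution claim to an auto-correlation estimate for the Rudin-Shapiro sequence. Parametrize $\Sb^3$ by $(z, w) = (\cos\phi\, e^{i\theta_1}, \sin\phi\, e^{i\theta_2})$ with $\phi \in [0, \pi/2]$. From the Beta integral one finds $\|z^j w^{N-j}\|_{\Lr^2(\Sb^3)}^2 = j!(N-j)!/(N+1)!$, so multiplying out $P_{N,k}\,\ol{P_{N,k}}$ and grouping terms by $m = j - j'$ produces the Fourier-type decomposition
\begin{equation*}
|P_{N,k}(z,w)|^2 \; = \; \sum_{m \in \Z} e^{im\bigl(\theta_1 - \theta_2 + \frac{2\pi k}{N+1}\bigr)} R_m(\phi),
\end{equation*}
where $R_m(\phi)$ is a binomial-weighted correlation sum of $\sigma_j \sigma_{j-m}$. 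The zero mode collapses, via the binomial theorem, to $R_0(\phi) = \sum_j \binom{N}{j}(\cos^2\phi)^j(\sin^2\phi)^{N-j} \equiv 1$, which already accounts exactly for the expected contribution $\int f\, d\Vol$.

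A Stone-Weierstrass density argument (leveraging $\|P_{N,k}\|_{\Lr^2}^2 = 1$) reduces the theorem to test functions of the form $f = g(\phi) e^{i(p\theta_1 + q\theta_2)}$. Orthogonality of angular modes forces $p + q = 0$, and for $(p,q) = (0,0)$ the identity is immediate, so it remains to treat $p = -q = m \neq 0$. In that case $\int f\, d\Vol = 0$ and I must show
\begin{equation*}
\int_0^{\pi/2} g(\phi)\, R_m(\phi)\, \sin\phi\cos\phi\, d\phi \; \longrightarrow \; 0 \qquad \text{as } N \to \infty.
\end{equation*}
Smoothness of $f$ on $\Sb^3$ forces $g(\phi)$ to vanish like $(\sin\phi\cos\phi)^{|m|}$ near the two Hopf circles $\phi = 0, \pi/2$; this boundary vanishing is essential below.

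For fixed $m \neq 0$, the decisive step is to pass from the exact binomial-weighted sum to a smoothly weighted correlation. Setting $t = \cos^2\phi$ and using Stirling on $\sqrt{\binom{N}{j}\binom{N}{j-m}}$ together with the Beta identity $\int_0^1 t^j(1-t)^{N-j}\, dt = 1/\bigl((N+1)\binom{N}{j}\bigr)$, I expect the integral to reduce, up to $o(1)$, to a Riesz-type average
\begin{equation*}
\frac{1}{N+1}\sum_{j = 0}^N \sigma_j \sigma_{j - m}\, G_m(j/N),
\end{equation*}
for a smooth function $G_m$ on $[0,1]$ built from $g$ and $m$. The theorem then follows from the auto-correlation bound for the Rudin-Shapiro sequence; I would derive it from the classical cancellation $\bigl\|\sum_j \sigma_j e^{ij\theta}\bigr\|_{\Lr^\infty(\T)} = O(\sqrt{N+1})$ via Abel summation after Fourier-expanding $G_m$, or appeal directly to a partial-sum correlation estimate for $\{\sigma_j\}$.

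The main obstacle is the Stirling reduction, i.e.\ legitimately replacing the binomial weighting by the smooth sample $G_m(j/N)/(N+1)$. Interior asymptotics are standard, but the boundary regions $j = O(\sqrt{N})$ and $j = N - O(\sqrt{N})$ require care; fortunately, the endpoint vanishing of $g$ dictated by the smoothness of $f$ is exactly what is needed to suppress these boundary contributions, completing the reduction.
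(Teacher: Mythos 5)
Your strategy is essentially the position-space shadow of the paper's argument: the paper deduces Theorem \ref{thm:equid} from the phase-space statement (Theorem \ref{thm:sc}), and the proof of the latter, once the fiber variables are switched off (the case $a=b_1=b_2=0$ there), is structurally your decomposition --- reduce to test symbols $\rho^\gamma e^{i\beta\theta_1}e^{-i\beta\theta_2}$, observe that the diagonal part collapses via $\sigma_j^2=1$ and the Beta identity (your $R_0\equiv 1$), and kill the off-diagonal modes by Abel summation against an autocorrelation bound for the Rudin--Shapiro sequence. One difference in execution is worth noting: the paper takes monomial radial profiles $\rho^\gamma$, so the Beta integral is an exact ratio of Gamma functions and the resulting weights $B_j$ satisfy $B_{j+1}-B_j=O(j^{\gamma-1})$ directly; this bypasses the Stirling/Laplace reduction and the boundary regions $j=O(\sqrt N)$, $N-j=O(\sqrt N)$ that you single out as the main obstacle. (Those regions are in any case harmless: by Cauchy--Schwarz applied to the Beta integral, each off-diagonal term $\sqrt{\binom Nj\binom N{j-m}}\int_0^1 t^{j-m/2}(1-t)^{N-j+m/2}\,dt$ is at most $\frac{1}{N+1}$, so $O(\sqrt N)$ boundary terms contribute $O(N^{-1/2})$ without using the vanishing of $g$ at the Hopf circles.)

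The one genuine gap is your primary proposal for the correlation input. The flatness bound $\bigl\|\sum_j \sigma_j e^{ij\theta}\bigr\|_{L^\infty}=O(\sqrt{N+1})$ does \emph{not} yield cancellation in the fixed-shift sums: the quantity $\sum_j\sigma_j\sigma_{j+m}$ (and likewise its twisted or smoothly weighted versions $\sum_j\sigma_j\sigma_{j+m}e^{ij\alpha}$, $\sum_j\sigma_j\sigma_{j+m}G_m(j/N)$) is a Fourier coefficient of $|P_N(\theta)|^2$ or of $P_N(\theta+\alpha)\overline{P_N(\theta)}$, and the sup-norm bound only gives the trivial estimate $O(N)$ --- exactly the size you must beat. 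Abel summation after Fourier-expanding $G_m$ does not repair this, since Abel summation itself needs bounds on the partial sums $\sum_{j\le J}\sigma_j\sigma_{j+m}$, which is precisely the quantity in question; autocorrelation decay for Rudin--Shapiro is a strictly stronger fact than flatness of the polynomials. Your fallback option is the correct one and is what the paper uses: the bound of Allouche--Choi--Denise--Erd\'elyi--Saffari \cite{ACDES} (Theorem \ref{thm:auto}), $\bigl|\sum_{j\le N}\sigma_j\sigma_{j+\beta}\bigr|\le C_0N^{c_0}$ with $c_0<0.74$, which together with the $O(j^{\gamma-1})$ (or $O(\|G_m'\|_\infty/N)$) increment bound gives the off-diagonal contribution $O(N^{c_0-1})\to0$. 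With that substitution, and with the Stirling step either carried out carefully or avoided as above, your outline goes through.
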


The equidistribution of the spherical harmonics in Theorem \ref{thm:equid} follows from the description of the semiclassical measure associated to them:
\begin{thm}[Semiclassical measure of spherical harmonics by Bourgain]\label{thm:sc}
Let $\Op_N(f)$ be a semiclassical pseudo-differential operator with symbol $f\in C^\infty(T^*\Sb^3)$, where $T^*\Sb^3=\{(q,\xi):q\in\Sb^3,\xi\in T^*_q\Sb^3\}$ is the cotangent bundle of $\Sb^3$. Then
$$\lim_{N\to\infty}\left\langle\Op_N(f)P_{N,k},P_{N,k}\right\rangle_{L^2(\Sb^3)}=\int_0^1\int_{T_\rho}f\left(q,\xi_\rho\right)\,d\Area_\rho(q)d\rho,$$
where $\{T_\rho:0\le\rho\le1\}$ is the family of Clifford tori in $\Sb^3$ (c.f., Section \ref{sec:H}), $d\Area_\rho$ is the uniform measure on $T_\rho$ normalized so that $\Area(T_\rho)=1$, and $\xi_\rho=(0,\rho,1-\rho)\in T_q^*T_\rho$ for each $q\in T_\rho$.
\end{thm}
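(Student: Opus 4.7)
My plan is to expand $P_{N,k}$ in the normalized monomial basis $e_j^N := z^jw^{N-j}/\|z^jw^{N-j}\|_{L^2(\Sb^3)}$ and reduce the question to asymptotics of matrix elements of $\Op_N(f)$. Since $P_{N,k} = (N+1)^{-1/2}\sum_{j=0}^N\sigma_je^{2\pi ijk/(N+1)}e_j^N$, one has
$$\langle\Op_N(f)P_{N,k},P_{N,k}\rangle = \frac{1}{N+1}\sum_{j,j'=0}^N\sigma_j\overline{\sigma_{j'}}e^{2\pi ik(j-j')/(N+1)}A_{jj'}^N,\quad A_{jj'}^N:=\langle\Op_N(f)e_j^N,e_{j'}^N\rangle.$$
The theorem will follow once I show that the diagonal sum ($j=j'$) converges to the asserted integral and that the off-diagonal sum is $o(1)$.

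For the diagonal, I would work in Hopf coordinates $(z,w)=(\sqrt{\rho}\,e^{i\alpha},\sqrt{1-\rho}\,e^{i\beta})$, in which $e_j^N$ is a joint eigenfunction of $-i\partial_\alpha$ and $-i\partial_\beta$ with eigenvalues $j$ and $N-j$, and in which Laplace's method applied to the Beta-function normalization shows $|e_j^N|^2\,d\Vol$ concentrates on $T_{j/N}$ with width $O(N^{-1/2})$. With semiclassical parameter $h=1/N$, the resulting semiclassical defect measure of $e_j^N$ is $d\Area_{j/N}\otimes\delta_{\xi_{j/N}}$, which yields $A_{jj}^N = \int_{T_{j/N}}f(\cdot,\xi_{j/N})\,d\Area_{j/N}+o(1)$ uniformly for $j/N$ in compact subsets of $(0,1)$. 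The Riemann sum $\frac{1}{N+1}\sum_j A_{jj}^N$ then converges to $\int_0^1\int_{T_\rho}f(q,\xi_\rho)\,d\Area_\rho(q)\,d\rho$; bands with $j/N$ near $0$ or $1$ are controlled by the $L^2$-boundedness of $\Op_N(f)$ together with the $O(N^{-1/2})$ concentration width.

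For the off-diagonal terms, set $h=j'-j$. Expanding $f$ in a Fourier series in the Hopf angles $(\alpha,\beta)$, and using that $e_j^N\,\overline{e_{j+h}^N}$ carries the single angular Fourier mode $e^{ih(\beta-\alpha)}$, a direct pseudodifferential computation yields $A_{j,j+h}^N = g_h^N(j/N)+O(1/N)$, where $g_h^N$ is smooth on $[0,1]$ and satisfies $\|g_h^N\|_{C^1}\le C_M(1+|h|)^{-M}$ for every $M$, by smoothness of the symbol $f$. The Rudin--Shapiro correlation bound $|\sum_{j\in I}\sigma_j\overline{\sigma_{j+h}}|=o(N)$, uniform over $h\neq 0$ and intervals $I\subset\{0,\dots,N\}$, combined with Abel summation, then gives $|\sum_j\sigma_j\overline{\sigma_{j+h}}g_h^N(j/N)|=o(N)\,C_M(1+|h|)^{-M}$; summing over $h\neq 0$ produces $o(1)$ for the off-diagonal contribution.

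The principal obstacle is the off-diagonal step: one must obtain a joint expansion of $A_{j,j+h}^N$ that is simultaneously smooth in $j/N$ and rapidly decaying in $|h|$, uniformly in $N$, so that a single Rudin--Shapiro correlation estimate controls all off-diagonal modes at once. This requires marrying the Hopf-torus Fourier decomposition with the semiclassical calculus on $\Sb^3$ (rather than a flat background) and handling the degenerate ends $\rho=0,1$ of the Clifford foliation, e.g.\ by a partition of unity adapted to the Hopf fibration.
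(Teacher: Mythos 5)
Your proposal follows essentially the same route as the paper: expand $P_{N,k}$ in the normalized monomials $e_j^N$, obtain the limit measure from the diagonal terms as a Riemann sum over the Clifford tori, and kill the off-diagonal terms by combining the Rudin--Shapiro autocorrelation bound of Allouche--Choi--Denise--Erd\'elyi--Saffari with Abel summation. The paper implements exactly this scheme by reducing to symbols of the form $\rho^\gamma e^{i\beta_1\theta_1}e^{i\beta_2\theta_2}\eta^a\xi_1^{b_1}\xi_2^{b_2}$ (which settles, mode by mode, the joint smooth-in-$j/N$ and decaying-in-$h$ issue you flag as the principal obstacle) and by an explicit cancellation lemma for the $\eta^a$ terms, which is the computation hiding behind your assertion that the defect measure of $e_j^N$ sits at $\xi_{j/N}$ with $\eta=0$.
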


The semiclassical measure of a sequence of spherical harmonics describes the distribution of these functions in the phase space $T^*\Sb^3$. In particular, each semiclassical measure must be a probability measure on the cosphere bundle $S^*\Sb^3=\{(q,\xi):|\xi|_q=1\}$ and is invariant under the geodesic flow on $S^*\Sb^3$, see, e.g., Zworski \cite[Section 4.2]{Zw}. There are a large family of the invariant probability measures, each of which arises as the semiclassical measure of some sequence of spherical harmonics, see Jakobson-Zelditch \cite{JZ}. 

Theorem \ref{thm:sc} identifies the semiclassical measure of the spherical harmonics $P_{N,k}$ by Bourgain as the singular measure on the set 
$$\left\{T_\rho\times\{\xi_\rho\}:0\le\rho\le1\right\}\subset S^*\Sb^3.$$
This result provides an explicit example of spherical harmonics exhibiting a unique localization pattern in the phase space $S^*\Sb^3$, specifically along the family of Clifford tori. To the author's knowledge, this phenomenon is new in the study of Laplacian eigenfunctions.

The semiclassical measure of $P_{N,k}$ in $S^*\Sb^3$ readily imply their distribution on $\Sb^3$. In particular, $\Sb^3$ is foliated by the family of Clifford tori $\{T_\rho:0\le\rho\le1\}$, and the projection of the semiclassical measure of $P_{N,k}$ from $S^*\Sb^3$ onto $\Sb^3$ coincides with the Riemannian volume $d\Vol=d\Area_\rho d\rho$, implying that $P_{N,k}$ tend to be equidistributed on $\Sb^3$ as in Theorem \ref{thm:equid}:
\begin{proof}[Proof of Theorem \ref{thm:equid}]
Each $f\in C^\infty(\Sb^3)$ can be regarded as a semiclassical pseudo-differential symbol from $C^\infty(T^*\Sb^3)$ which is independent of the $\xi$-variable. Then $\Op_N(f)$ is the multiplication operator by $f$. Hence, by Theorem \ref{thm:sc},
\begin{eqnarray*}
\left\langle\Op_N(f)P_{N,k},P_{N,k}\right\rangle_{L^2(\Sb^3)}&=&\int_{\Sb^3}f\left|P_{N,k}\right|^2\,d\Vol\\
&\to&\int_0^1\int_{T_\rho}f(q)\,d\Area_\rho(q)d\rho\\
&=&\int_{\Sb^3}f\,d\Vol,
\end{eqnarray*}
proving Theorem \ref{thm:equid}.
\end{proof}

\subsection*{Background}
Our investigation of Bourgain's work on spherical harmonics \cite{B1} is inspired by the study of Laplacian eigenfunction behavior on manifolds, particularly concerning $L^p$-norm estimates and distribution, and how these properties are influenced by underlying geometry and the geodesic flow. For an overview of these topics, see Sogge \cite{So}.

On $\Sb^3$, certain spherical harmonics exhibit significant localization, with their $L^\infty$-norm growing at a specific rate with the degree. For instance, the spherical harmonic $z^j w^{N-j}$ with $0<j<N$, as considered in \eqref{eq:P}, is concentrated near the Clifford torus $T_\rho$ with $\rho = \frac{j}{N}$ and satisfies 
$$\frac{\left\|z^jw^{N-j}\right\|_{L^\infty(\Sb^3)}}{\left\|z^jw^{N-j}\right\|_{L^2(\Sb^3)}}\approx N^{\frac{1}{2}} \min\{j,N - j\}^{-\frac{1}{4}},$$
see Bourgain \cite[Equation (4)]{B1}.

In contrast, random spherical harmonics on $\Sb^3$ behave quite differently. Almost surely in a certain probabilistic sense, random spherical harmonics $u_N$ with degree $N$ tend to be equidistributed on $\Sb^3$ and satisfy 
$$\left\|u_N\right\|_{L^\infty(\Sb^3)}\approx\sqrt{\log N},\quad\text{where }\left\|u_N\right\|_{L^2(\Sb^3)}=1.$$
See Burq-Lebeau \cite{BL} and VanderKam \cite{V}. Furthermore, the semiclassical measure of $u_N$ is almost surely the (normalized) Liouville measure, that is, the uniform measure on the phase space $S^*\Sb^3$, see VanderKam \cite{V} and Zelditch \cite{Ze}. In this case, they are said to be quantum ergodic (the typical behavior of Laplacian eigenfunctions on manifolds with ergodic geodesic flow; see Zworski \cite[Chapter 15]{Zw}).

In such a context, Bourgain's spherical harmonics \eqref{eq:P} represent an exceptional case that they are uniformly bounded. These uniformly bounded Laplacian eigenfunctions are rare among different geometries. Other than the standard basis on a Euclidean tori and rectangles, their construction is only known on spheres, see the works by Bourgain \cite{B1, B2}, Demeter-Zhang \cite{DZ}, Han \cite{H}, Marzo-Ortega-Cerd\`a \cite{MOC}, Shiffman \cite{Sh}.

Our main results in Theorems \ref{thm:equid} and \ref{thm:sc} show that Bourgain's spherical harmonics tend to be equidistributed on $\Sb^3$, but are not quantum ergodic on $S^* \Sb^3$. In particular, their semiclassical measure is the singular measure on the family of Clifford tori in $\Sb^3$.

Lastly, we mention that ``the Clifford tori'' in this paper refer to the family of surfaces $\{T_\rho:0\le\rho\le1\}$ in $\Sb^3$, see Section \ref{sec:H}. The surface $T_\frac12$ in this family is an embedded minimal surface in $\Sb^3$, which is commonly called ``the Clifford torus''. There is a close relation between the minimal surfaces and the first Laplacian eigenfunctions on them, for instance, Yau \cite[Problem 100]{Y} conjectured that the first (non-zero) Laplacian eigenvalue of an embedded minimal surface in $\Sb^n$ is $n-1$. Indeed, the first Laplacian eigenvalue of $T_\frac12$ is $2$ on $\Sb^3$, but Yau's conjecture remains open in full generality. However, our focus is on the Laplacian eigenfunctions on $\Sb^3$ in the high-eigenvalue limit (i.e., semiclassical limit), and the result of the relation between the Clifford tori in $\Sb^3$ and the semiclassical measure seems new. 

\section{Preliminaries}
In this section, we review the preliminaries on the Rudin-Shapiro sequences, geometry of $\Sb^3$ and the Clifford tori, and semiclassical pseudo-differential operators.

\subsection{Rudin-Shapiro sequences}\label{sec:RS}
Rudin and Shapiro \cite{R} constructed an example of a trigonometric series $P_N(t)$ of degree $N\in\N$ with coefficients $\pm1$ such that $\|P_N\|_{L^\infty}\approx\|P_N\|_{L^2}$ uniformly as $N\to\infty$.
\begin{defn}[Rudin-Shapiro polynomials and sequences]
Let $P_0=Q_0=1$. For $m\in\N$, inductively define
$$\begin{cases}
P_{m+1}(t)=P_m(t)+e^{i2^mt}Q_m(t),\\
Q_{m+1}(t)=P_m(t)-e^{i2^mt}Q_m(t).
\end{cases}$$
The resulting polynomials are written as
$$P_N(t)=\sum_{j=0}^N\sigma_j^Pe^{ijt}\quad\text{and}\quad Q_N(t)=\sum_{j=0}^N\sigma_j^Qe^{ijt},$$
where $\{\sigma_j^P\}$ and $\{\sigma_j^Q\}$ are called the Rudin-Shapiro sequences.
\end{defn}

A Rudin-Shapiro sequence has low auto-correlation. Indeed, Allouche-Choi-Denise-Erd\'elyi-Saffari \cite{ACDES} proved that
\begin{thm}\label{thm:auto}
There are absolute constants $C_0>0$ and $0<c_0<0.74$ such that
$$\left|\sum_{j=0}^N\sigma_j\sigma_{j+\beta}\right|\le C_0N^{c_0}\quad\text{for all }N\in\N\text{ and }\beta\in\Z\setminus\{0\}.$$
\end{thm}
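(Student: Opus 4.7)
The plan is to exploit the self-similar dyadic structure of the Rudin-Shapiro polynomials and reduce the autocorrelation bound to a growth-rate analysis of a finitely generated matrix product. Write $L_m=2^m$ and, for $0<\beta<L_m$, introduce the four correlation sums
$$a_m(\beta)=\sum_j\sigma_j^P\sigma_{j+\beta}^P,\quad b_m(\beta)=\sum_j\sigma_j^P\sigma_{j+\beta}^Q,\quad c_m(\beta)=\sum_j\sigma_j^Q\sigma_{j+\beta}^P,\quad d_m(\beta)=\sum_j\sigma_j^Q\sigma_{j+\beta}^Q,$$
with $\sigma_j^P=\sigma_j^Q=0$ outside $[0,L_m-1]$; by symmetry it suffices to treat positive shifts. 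The recursion $P_{m+1}=P_m+e^{i2^mt}Q_m$, $Q_{m+1}=P_m-e^{i2^mt}Q_m$ translates into concatenation rules: $\sigma^{P,(m+1)}$ is $\sigma^{P,(m)}$ followed by $\sigma^{Q,(m)}$, and $\sigma^{Q,(m+1)}$ is $\sigma^{P,(m)}$ followed by $-\sigma^{Q,(m)}$. Splitting a length-$2L_m$ correlation at the midpoint $j=L_m$ and separating the regimes $0<\beta<L_m$ and $L_m\le\beta<2L_m$ produces a linear recursion
$$\vec w_{m+1}(\beta)=T_{\varepsilon(\beta)}\,\vec w_m(\beta\bmod L_m),$$
where $\vec w_m=(a_m,b_m,c_m,d_m)^T$ and $T_0,T_1$ are explicit $4\times 4$ matrices with entries in $\{-1,0,1\}$; the bit $\varepsilon(\beta)\in\{0,1\}$ records whether $\beta$ lies below or above the midpoint $L_m$.

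Iterating $m$ times writes $\vec w_m(\beta)$ as a product of $m$ matrices from $\{T_0,T_1\}$ selected by the binary digits of $\beta$. Its norm is then controlled by the joint spectral radius $\rho=\rho(T_0,T_1)$, giving
$$|a_m(\beta)|\le C\rho^m=C\,L_m^{c_0},\qquad c_0=\log_2\rho.$$
A spectral analysis of short products, combined with the Rudin-Shapiro identity $|P_m|^2+|Q_m|^2=2L_m$, which pins $\vec w_m$ to an invariant affine subspace of dimension below $4$, yields $\rho<2^{0.74}$. For general $N$ not of the form $L_m-1$, I would decompose $N+1=\sum_i 2^{m_i}$ in binary and apply the bound block by block; the resulting sum is absorbed into the absolute constant $C_0$ without changing the exponent.

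The main obstacle is the sharp bound $\rho<2^{0.74}$. Each individual matrix $T_i$ has spectral norm comparable to $2$, so a single-step estimate is useless. One must examine products of length $2$ or $3$ and restrict attention to the invariant subspace cut out by the Rudin-Shapiro identity; the cancellation that pushes the effective growth rate strictly below $2^{0.74}$ uses the specific sign pattern produced by the recursion rather than merely the $\pm 1$ nature of the sequences. For a generic $\pm 1$ sequence the joint spectral radius equals $2$ and no nontrivial bound can hold, so the heart of the argument is identifying and quantifying this Rudin-Shapiro-specific cancellation in the transfer matrices.
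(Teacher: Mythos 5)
Two remarks before the substance: the paper does not prove Theorem \ref{thm:auto} at all — it is quoted from Allouche--Choi--Denise--Erd\'elyi--Saffari \cite{ACDES} — so the benchmark is their argument, which does proceed, broadly as you propose, by recursions for the correlation sums inherited from the Rudin--Shapiro recursion. Within that strategy, however, your outline has a concrete structural error. The concatenation step does not close on the vector $\vec w_m(\beta\bmod L_m)$. For $0<\beta<L_m$, the autocorrelation at shift $\beta$ of the length-$2L_m$ word $(x,y)$ splits as the two within-block autocorrelations at shift $\beta$ \emph{plus} the cross-boundary term $\sum_{i=0}^{\beta-1}y_i\,x_{i+L_m-\beta}$, which is a cross-correlation at the complementary shift $L_m-\beta$, not at $\beta$. (Only in the regime $L_m\le\beta<2L_m$ does everything reduce to the single shift $\beta-L_m$.) So the recursion couples the shifts $\beta$ and $L_m-\beta$, and your $4$-dimensional transfer system with matrices $T_0,T_1$ indexed by one binary digit does not reproduce the correlations as written; you must enlarge the state (track the complementary pair of shifts), or work with the substitution form $\sigma_{2j}=\sigma_j$, $\sigma_{2j+1}=(-1)^j\sigma_j$, whose recursions couple the shifts $\beta$ and $\beta\pm1$ — which is essentially what \cite{ACDES} do.

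The second, and larger, gap is that the entire quantitative content of the theorem is asserted rather than proved. A single step of the (corrected) recursion can double the correlation, so the trivial bound is $O(N)$; the theorem \emph{is} the claim that the joint spectral radius of the corrected transfer system, restricted to the subspace cut out by the Rudin--Shapiro identity (which for $\beta\neq0$ gives $a_m(\beta)+d_m(\beta)=0$), is strictly below $2^{0.74}$. Saying that ``a spectral analysis of short products yields $\rho<2^{0.74}$'' restates the goal without supplying the matrices, the invariant subspace computation, or the estimate; in \cite{ACDES} this is precisely where the work lies (their exponent is roughly $0.73$). Finally, the passage from $N=2^m-1$ to general $N$ by writing the truncated sequence as $O(\log N)$ dyadic blocks generates cross-block correlation terms and a logarithmic number of contributions; these cannot simply be ``absorbed into the absolute constant $C_0$''. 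The reduction is harmless only because the true exponent leaves room below $0.74$ (one may give up a little in the exponent, or sum the dyadic block bounds geometrically), and a complete proof must say this explicitly and also bound the cross-block partial correlations, which again requires the cross-correlation components of the transfer system.
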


\subsection{Hopf coordinates on $\Sb^3$}\label{sec:H}
The Hopf coordinates of a point $q=(x_1,y_1,x_2,y_2)\in\Sb^3$ are given by
$$x_1=\sqrt\rho\cos\theta_1,\quad y_1=\sqrt\rho\sin\theta_1,\quad x_2=\sqrt{1-\rho}\cos\theta_2,\quad 
y_2=\sqrt{1-\rho}\cos\theta_2,$$
where $\rho\in[0,1]$ and $\theta_1,\theta_2\in[0,2\pi)$. With
$$z=x_1+iy_1\quad\text{and}\quad w=x_2+iy_2,$$
we have that
$$(z,w)\in\C^2,\quad\text{where }\left|z\right|=\sqrt\rho\text{ and }\left|w\right|=\sqrt{1-\rho}.$$
The round metric at $q\in\Sb^3$ in the Hopf coordinates $(\rho,\theta_1,\theta_2)$ is given by
$$\left|\left(u,v_1,v_2\right)\right|_q^2=|u|^2+\rho^2\left|v_1\right|^2+(1-\rho)^2\left|v_2\right|^2\quad\text{for }\left(u,v_1,v_2\right)\in T_p\Sb^3.$$
This induces a metric in the cotangent space $T_q^*\Sb^3$ via
\begin{equation}\label{eq:metric}
\left|\left(\eta,\xi_1,\xi_2\right)\right|_q^2=|u|^2+\rho^{-2}\left|\xi_1\right|^2+(1-\rho)^{-2}\left|\xi_2\right|^2\quad\text{for }\left(\eta,\xi_1,\xi_2\right)\in T_q^*\Sb^3.
\end{equation}
\begin{ex}
Let $q=(\rho,\theta_1,\theta_2)\in\Sb^3$ in the Hopf coordinates. Define 
\begin{equation}\label{eq:xirho}
\xi_\rho=(0,\rho,1-\rho)\in T_q^*\Sb^3
\end{equation} 
Then by \eqref{eq:metric}, $|\xi_\rho|_q=1$, which indicates that $\xi_\rho\in S^*_q\Sb^3$, the cosphere space at $q\in\Sb^3$.
\end{ex}

The normalized Riemannian volume element $d\Vol$ on $\Sb^3$ in the Hopf coordinates $(\rho,\theta_1,\theta_2)$ is 
$$\,d\Vol=\frac{1}{4\pi^2}\,d\rho d\theta_1d\theta_2.$$
Therefore,
\begin{equation}\label{eq:norm}
\left\|z^jw^{N-j}\right\|_{L^2(\Sb^3)}^2=\frac{1}{4\pi^2}\int_0^{2\pi}\int_0^{2\pi}\int_0^1\rho^j(1-\rho)^{N-j}\,d\rho d\theta_1d\theta_2=\frac{j!(N-j)!}{(N+1)!}.
\end{equation}
\begin{defn}[Clifford tori]
Let $0\le\rho\le1$. The Clifford torus $T_\rho$ is defined as
$$T_\rho=\left\{\left(\sqrt\rho\cos\theta_1,\sqrt\rho\sin\theta_1,\sqrt{1-\rho}\cos\theta_2,\sqrt{1-\rho}\sin\theta_2\right):0\le\theta_1,\theta_2<2\pi\right\},$$
which is equipped with the normalized area form
$$d\Area_\rho=\frac{1}{4\pi^2\sqrt{\rho(1-\rho)}}\,d\theta_1d\theta_2.$$
\end{defn}

\subsection{Semiclassical pseudo-differential operators}
We recall the definition and basic properties of semiclassical pseudo-differential operators. We refer to Zworski \cite{Zw} for a systematic treatment of semiclassical analysis.

\begin{defn}[Semiclassical pseudo-differential operators]
Let $f(q,\xi)\in C^\infty(T^*\Sb^3)$, where $q=(\rho,\theta_1,\theta_2)\in\Sb^3$ and $\xi=(\eta,\xi_1,\xi_2)\in T_q\Sb^3$ in the Hopf coordinates. Define the semiclassical pseudo-differential operators $\Op_N(f)$ with symbol $f$ as
\begin{eqnarray*}
&&\Op_N(f)u\left(\rho,\theta_1,\theta_2\right)\\
&=&\left(\frac{N}{2\pi}\right)^3\int_{T^*\Sb^3}e^{iN\left[(\rho-\rho')\eta+\left(\theta_1-\theta_1'\right)\xi_1+\left(\theta_2-\theta_2'\right)\xi_2\right]}f\left(\rho,\theta_1,\theta_2,\eta,\xi_1,\xi_2\right)u\left(\rho',\theta_1',\theta_2'\right)\,d\rho'd\theta_1'd\theta_2'd\eta d\xi_1d\xi_2,
\end{eqnarray*}
where $u\in C^\infty(\Sb^3)$.
\end{defn}

\begin{ex}
Let $f(q)\in C^\infty(\Sb^3)$, that is, $f$ is independent of the $\xi$-variable. Then $\Op_N(f)u(q)=f(q)u(q)$.
\end{ex}

\begin{ex}
Let $f(\xi)=\eta^a$ for $a\in\N$. Then 
$$\Op_N(f)u(q)=\left(\frac{\partial}{i\partial\rho}\right)^au\left(\rho,\theta_1,\theta_2\right).$$
\end{ex}

\begin{thm}\label{thm:sdo}\hfill
\begin{enumerate}[(i).]
\item If $f\in C^\infty_0(T^*\Sb^3)$, then
$$\left\|\Op_N(f)\right\|_{L^2(\Sb^3)\to L^2(\Sb^3)}=O(1),$$
where the implied constant depends on $C^m$ norms of $f$ for some absolute constant $m\in\N$.
\item If $f\in C^\infty_0(T^*\Sb^3)$, then
$$\Op_N(f)^\star=\Op_N\left(\ol f\right)+O_{L^2(\Sb^3)\to L^2(\Sb^3)}\left(N^{-1}\right),$$
where the implied constant depends on $C^m$ norms of $f$ for some absolute constant $m\in\N$.
\item If $f,g\in C^\infty_0(T^*\Sb^3)$, then
$$\Op_N(fg)=\Op_N(f)\Op_N(g)+O_{L^2(\Sb^3)\to L^2(\Sb^3)}\left(N^{-1}\right),$$
where the implied constant depends on $C^m$ norms of $f$ and $g$ for some absolute constant $m\in\N$.
\item Let $\Delta_{\Sb^3}u=-N(N+2)u$. Suppose that $\chi\in C^\infty(T^*\Sb^3)$ such that $\chi=1$ on a neighborhood of $S^*\Sb^3$. Then
$$\left\|\Op_N(\chi)u-u\right\|_{L^2(\Sb^3)}=O\left(N^{-\infty}\right).$$
That is, the eigenfunction $u$ is microlocalized near $S^*\Sb^3$.
\end{enumerate}
\end{thm}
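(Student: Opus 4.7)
The plan is to obtain parts (i)-(iii) by localizing in Hopf coordinate charts and invoking the standard semiclassical symbol calculus, and then to deduce (iv) by building a semiclassical parametrix using (iii). Since the Hopf chart covers $\Sb^3$ only away from a null set, I would first fix a finite atlas, define $\Op_N$ globally via a partition of unity subordinate to this atlas, and observe that the ambiguity between admissible quantizations is $O(N^{-1})$ in $L^2$. With this reduction, (i)-(iii) become the familiar semiclassical statements on $\R^3$ with small parameter $h=1/N$, and I would only indicate the mechanism behind each, citing Zworski's treatment for the detailed verification.

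For (i), the Calder\'on--Vaillancourt bound follows from integration by parts in $\xi$ and $q'$ to show $|K_N(q,q')|\lesssim N^3(1+N^2|q-q'|^2)^{-M}$ for the Schwartz kernel $K_N$ and any $M$, after which Schur's test gives the $L^2$-bound; alternatively one runs the Cotlar--Stein almost-orthogonality argument. In either case the implied constant depends only on finitely many $C^m$-seminorms of $f$. For (ii), I would write down the kernel of $\Op_N(f)^\star$, swap the roles of the two variables, and Taylor-expand $f$ about the new base-point; each factor $q'-q$ integrates by parts into $N^{-1}\partial_\xi$, producing $\Op_N(\ol f)+N^{-1}\Op_N(r_N)$ with $r_N$ uniformly bounded in every $C^m$-seminorm, so (i) applied to $r_N$ gives the claimed $O(N^{-1})$ remainder.

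For (iii), I would write $\Op_N(f)\Op_N(g)$ as a single oscillatory integral with phase $(q-q')\cdot\xi+(q'-q'')\cdot\xi'$ and apply stationary phase in the intermediate variables $(q',\xi')$, whose critical set is $q'=q''$, $\xi'=\xi$ with non-degenerate Hessian of determinant $\pm 1$. The leading contribution is the symbol $f(q,\xi)g(q,\xi)$ after one further Taylor expansion that turns factors of $q''-q$ back into $(iN)^{-1}\partial_\xi$, which yields $\Op_N(fg)$ plus a first correction of order $N^{-1}$; the truncated remainder has a uniformly bounded symbol, so its norm is controlled by (i).

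For (iv), I would build a parametrix. The principal symbol of $-N^{-2}\Delta_{\Sb^3}$ in the Hopf chart is $p(q,\xi)=|\xi|_q^2=\eta^2+\rho^{-2}\xi_1^2+(1-\rho)^{-2}\xi_2^2$, with an $O(N^{-1})$ subprincipal correction from the Riemannian density, so $\Delta_{\Sb^3}u=-N(N+2)u$ gives $\bigl(\Op_N(p)-\Id\bigr)u=O_{L^2}(N^{-1})$. Split $1-\chi=\psi_0+\psi_\infty$ with $\psi_0$ supported in a compact region disjoint from $\{p=1\}$ and $\psi_\infty$ supported where $|\xi|_q$ is large. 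On $\supp\psi_0$ the symbol $a_0=\psi_0/(p-1)$ is smooth and compactly supported, and (iii) yields $\Op_N(a_0)\bigl(\Op_N(p)-\Id\bigr)=\Op_N(\psi_0)+N^{-1}R_1$, where $R_1$ lies in the same calculus with symbol again supported away from $S^*\Sb^3$; applying to $u$ gives $\|\Op_N(\psi_0)u\|_{L^2}=O(N^{-1})$. Iterating by absorbing $R_1$ into a corrector $a_1$ of the same form upgrades this to $O(N^{-k})$ for every $k$, hence $O(N^{-\infty})$; the tail $\psi_\infty$ is handled by elliptic regularity in the regime where $p-1$ is bounded below by $c|\xi|_q^2$. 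The main obstacle is (iv): parts (i)-(iii) are routine once the oscillatory-integral bookkeeping is in place, while (iv) requires the iterated parametrix together with a careful treatment of the non-compact high-$|\xi|$ tail.
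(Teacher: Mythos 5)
The paper does not actually prove this theorem: it is quoted as the standard semiclassical calculus, with the reference to Zworski \cite{Zw}, and your sketch reproduces precisely that standard route (Calder\'on--Vaillancourt via kernel bounds or Cotlar--Stein for (i), Taylor/stationary-phase expansions for (ii)--(iii), and an iterated elliptic parametrix for (iv)), so it matches what the paper relies on. The one point requiring genuine care is (iv), where $1-\chi$ is not compactly supported so (i)--(iii) do not apply verbatim; your plan to handle the large-$|\xi|$ tail by ellipticity (equivalently, by using the exact eigenfunction equation to control $\|(-N^{-2}\Delta_{\Sb^3})^k u\|_{L^2}$) is the standard fix, so the outline is correct.
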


\section{Proof of Theorem \ref{thm:sc}}
Let $N\in\N$ and $k=0,...,N$. We prove Theorem \ref{thm:sc}, that is, for each $f\in C^\infty(T^*\Sb^3)$,
$$\lim_{N\to\infty}\left\langle\Op_N(f)P_{N,k},P_{N,k}\right\rangle_{L^2(\Sb^3)}=\int_0^1\int_{T_\rho}f\left(q,\xi_\rho\right)\,d\Area_\rho(q)d\rho.$$
In the Hopf coordinates $(\rho,\theta_1,\theta_2)$ of $q\in\Sb^3$, it suffices to consider $f\in C^\infty(T^*\Sb^3)$ of the form
$$f\left(\rho,\theta_1,\theta_2,\eta,\xi_1,\xi_2\right)=\rho^\gamma e^{i\beta_1\theta_1}e^{i\beta_2\theta_2}\eta^a\xi_1^{b_1}\xi_2^{b_2},$$
where $(\eta,\xi_1,\xi_2)\in T^*_q\Sb^3$, $\gamma\in\N$, $\beta_1,\beta_2\in\Z$, and $a,b_1,b_2\in\N$. 

On one hand, with $\xi_\rho=(0,\rho,1-\rho)$ in \eqref{eq:xirho}, 
$$\int_0^1\int_{T_\rho}f\left(q,\xi_\rho\right)\,d\Area_\rho(q)d\rho=\delta_0(a)\delta_0\left(\beta_1\right)\delta_0\left(\beta_2\right)\int_0^1\rho^{\gamma+b_1}(1-\rho)^{b_2}\,d\rho.$$

On the other hand, we first perform a reduction of $\langle\Op_N(f)P_{N,k},P_{N,k}\rangle_{L^2(\Sb^3)}$ based on the semiclassical pseudo-differential calculus in Theorem \ref{thm:sdo}: Let $f_1=\rho^\gamma e^{i\beta_1\theta_1}e^{i\beta_2\theta_2}$ and $f_2=\eta^a\xi_1^{b_1}\xi_2^{b_2}$. Set $\chi\in C^\infty_0([\frac12,\frac32])$ such that $\chi(\eta)=1$ if $\frac34\le|\eta|_q\le\frac54$. Then
\begin{eqnarray*}
&&\left\langle\Op_N(f)P_{N,k},P_{N,k}\right\rangle_{L^2(\Sb^3)}\\&=&\left\langle\Op_N\left(\chi f_1\chi f_2\right)P_{N,k},P_{N,k}\right\rangle_{L^2(\Sb^3)}+O\left(N^{-\infty}\right)\\
&=&\left\langle\Op_N\left(\chi f_1\right)\Op_N\left(\chi f_2\right)P_{N,k},P_{N,k}\right\rangle_{L^2(\Sb^3)}+O\left(N^{-1}\right)\\
&=&\left\langle\Op_N\left(\chi f_2\right)P_{N,k},\Op_N\left(\ol{\chi f_1}\right)P_{N,k}\right\rangle_{L^2(\Sb^3)}+O\left(N^{-1}\right)\\
&=&\left\langle\Op_N\left(f_2\right)P_{N,k},\Op_N\left(\ol{f_1}\right)P_{N,k}\right\rangle_{L^2(\Sb^3)}+O\left(N^{-1}\right)\\
&=&\left\langle\Op_N\left(\eta^a\xi_1^{b_1}\xi_2^{b_2}\right)P_{N,k},\rho^\gamma e^{-i\beta_1\theta_1}e^{-i\beta_2\theta_2}P_{N,k}\right\rangle_{L^2(\Sb^3)}+O\left(N^{-1}\right).
\end{eqnarray*}
Here,
\begin{eqnarray*}
&&\Op_N\left(\eta^a\xi_1^{b_1}\xi_2^{b_2}\right)P_{N,k}\\
&=&\frac{1}{(iN)^{a+b_1+b_2}\sqrt{N+1}}\sum_{j=0}^N\frac{\sigma_je^{\frac{2\pi ikj}{N+1}}}{\left\|z^jw^{N-j}\right\|_{L^2(\Sb^3)}}\left(\frac{\partial}{\partial\rho}\right)^a\left(\frac{\partial}{\partial\theta_1}\right)^{b_1}\left(\frac{\partial}{\partial\theta_2}\right)^{b_2}\rho^\frac j2(1-\rho)^{\frac{N-j}{2}}e^{ij\theta_1}e^{i(N-j)\theta_2}\\
&=&\frac{1}{i^aN^{a+b_1+b_2}\sqrt{N+1}}\sum_{j=0}^N\frac{\sigma_je^{\frac{2\pi ikj}{N+1}}j^{b_1}(N-j)^{b_2}}{\left\|z^jw^{N-j}\right\|_{L^2(\Sb^3)}}\left(\frac{\partial}{\partial\rho}\right)^a\left[\rho^\frac j2(1-\rho)^{\frac{N-j}{2}}\right]e^{ij\theta_1}e^{i(N-j)\theta_2},
\end{eqnarray*}
and
$$\rho^\gamma e^{-i\beta_1\theta_1}e^{-i\beta_2\theta_2}P_{N,k}=\frac{1}{\sqrt{N+1}}\sum_{l=0}^N\frac{\sigma_le^{\frac{2\pi ikl}{N+1}}}{\left\|z^lw^{N-l}\right\|_{L^2(\Sb^3)}}\rho^{\frac l2+\gamma}(1-\rho)^{\frac{N-l}{2}}e^{i\left(l-\beta_1\right)\theta_1}e^{i\left(N-l-\beta_2\right)\theta_2}.$$
By \eqref{eq:norm},
\begin{eqnarray*}
&&\left\langle\Op_N(f)P_{N,k},P_{N,k}\right\rangle_{L^2(\Sb^3)}\\
&=&\frac{N!}{4\pi^2i^aN^a}\sum_{j,l=0}^N\sigma_j\sigma_le^{\frac{2\pi ik(j-l)}{N+1}}\cdot\frac{1}{\sqrt{j!(N-j)!}\sqrt{l!(N-l)!}}\cdot\left(\frac jN\right)^{b_1}\left(1-\frac jN\right)^{b_2}\\
&&\cdot\int_0^{2\pi}\int_0^{2\pi}\int_0^1e^{i\left(j-l+\beta_1\right)\theta_1}e^{i\left(l-j+\beta_2\right)\theta_2}\left(\frac{d}{d\rho}\right)^a\left[\rho^\frac j2(1-\rho)^{\frac{N-j}{2}}\right]\rho^{\frac l2+\gamma}(1-\rho)^{\frac{N-l}{2}}\,d\rho d\theta_1d\theta_2.
\end{eqnarray*}
The terms in the summation are non-zero only if $\beta_1=-\beta_2=\beta$ for some $\beta\in\Z$ and $l=j+\beta$. Under this condition, the above equation continues:
\begin{eqnarray}
&&\frac{e^{-\frac{2\pi ik\beta}{N+1}}N!}{i^aN^a}\sum_{j=0}^N\sigma_j\sigma_{j+\beta}\cdot\frac{1}{\sqrt{j!(N-j)!}\sqrt{(j+\beta)!(N-j-\beta)!}}\cdot\left(\frac jN\right)^{b_1}\left(1-\frac jN\right)^{b_2}\nonumber\\
&&\cdot\int_0^1\left(\frac{d}{d\rho}\right)^a\left[\rho^\frac j2(1-\rho)^{\frac{N-j}{2}}\right]\rho^{\frac j2+\frac\beta2+\gamma}(1-\rho)^{\frac{N-j}{2}-\frac\beta2}\,d\rho.\label{eq:sum}
\end{eqnarray}
We examine this summation in three cases.

\subsection{Case 1} 
Suppose that $a=0$ and $\beta=0$. Then
\begin{eqnarray*}
\eqref{eq:sum}&=&N!\sum_{j=0}^N\left|\sigma_j\right|^2\cdot\frac{1}{j!(N-j)!}\cdot\left(\frac jN\right)^{b_1}\left(1-\frac jN\right)^{b_2}\cdot\int_0^1\rho^{j+\gamma}(1-\rho)^{N-j}\,d\rho\\
&=&N!\sum_{j=0}^N\frac{(j+\gamma)!}{j!(N+\gamma+1)!}\cdot\left(\frac jN\right)^{b_1}\left(1-\frac jN\right)^{b_2}\\
&=&\frac{1}{(N+1)\cdots(N+\gamma+1)}\sum_{j=0}^N(j+1)\cdots(j+\gamma)\left(\frac jN\right)^{b_1}\left(1-\frac jN\right)^{b_2}\\
&=&\frac{1}{(N+1)\cdots(N+\gamma+1)}\sum_{j=0}^N\left(j^\gamma+O\left(j^{\gamma-1}\right)\right)\left(\frac jN\right)^{b_1}\left(1-\frac jN\right)^{b_2}\\
&=&\frac{N^{\gamma+1}}{(N+1)\cdots(N+\gamma+1)}\sum_{j=0}^N\left(\frac jN\right)^{\gamma+b_1}\left(1-\frac jN\right)^{b_2}\frac1N+O\left(N^{-1}\right)\\
&\to&\int_0^1\rho^{\gamma+b_1}(1-\rho)^{b_2}\,d\rho\quad\text{as }N\to\infty.
\end{eqnarray*}

\subsection{Case 2} 
Suppose that $a=0$ and $\beta\in\Z\setminus\{0\}$. Then
\begin{eqnarray*}
\eqref{eq:sum}&=&e^{-\frac{2\pi ik\beta}{N+1}}N!\sum_{j=0}^N\sigma_j\sigma_{j+\beta}\cdot\frac{1}{\sqrt{j!(N-j)!}\sqrt{(j+\beta)!(N-j-\beta)!}}\cdot\left(\frac jN\right)^{b_1}\left(1-\frac jN\right)^{b_2}\\
&&\cdot\int_0^1\rho^{j+\gamma+\frac\beta2}(1-\rho)^{N-j-\frac\beta2}\,d\rho\\
&=&\frac{e^{-\frac{2\pi ik\beta}{N+1}}N!}{(N+\gamma+1)!}\sum_{j=0}^N\sigma_j\sigma_{j+\beta}\cdot\frac{\Gamma\left(j+\gamma+\frac\beta2+1\right)\Gamma\left(N-j-\frac\beta2+1\right)}{\sqrt{j!(j+\beta)!}\sqrt{(N-j-\beta)!(N-j)!}}\cdot\left(\frac jN\right)^{b_1}\left(1-\frac jN\right)^{b_2}
\end{eqnarray*}
Denote 
$$A_j=\sigma_j\sigma_{j+\beta}\quad{and}\quad B_j=\frac{\Gamma\left(j+\gamma+\frac\beta2+1\right)\Gamma\left(N-j-\frac\beta2+1\right)}{\sqrt{j!(j+\beta)!}\sqrt{(N-j-\beta)!(N-j)!}}\cdot\left(\frac jN\right)^{b_1}\left(1-\frac jN\right)^{b_2}.$$
By Theorem \ref{thm:auto}, 
$$\sum_{l=0}^jA_l=O\left(j^{c_0}\right).$$
Moreover,
$$B_j=\left[j^\gamma+O\left(j^{\gamma-1}\right)\right]\cdot\left(\frac jN\right)^{b_1}\left(1-\frac jN\right)^{b_2},$$
which implies that
$$B_{j+1}-B_j=O\left(j^{\gamma-1}\right).$$
Applying Abel's summation by parts, 
\begin{eqnarray*}
\sum_{j=0}^NA_jB_j&=&B_N\sum_{j=0}^NA_j-\sum_{j=0}^{N-1}\left(\sum_{l=0}^jA_l\right)\left(B_{j+1}-B_j\right)\\
&=&O\left(N^{\gamma+c_0}\right)-\sum_{j=0}^{N-1}O\left(j^{c_0+\gamma-1}\right)\\
&=&O\left(N^{\gamma+c_0}\right).
\end{eqnarray*}
Therefore,
$$\eqref{eq:sum}=\frac{e^{-\frac{2\pi ik\beta}{N+1}}N!}{(N+\gamma+1)!}\sum_{j=0}^NA_jB_j=O\left(N^{c_0-1}\right)\to0\quad\text{as }N\to\infty,$$
because $0<c_0<0.74$ in Theorem \ref{thm:auto}. 

\subsection{Case 3} 
Suppose that $a\in\N\setminus\{0\}$. Then
\begin{eqnarray*}
\eqref{eq:sum}&=&\frac{e^{-\frac{2\pi ik\beta}{N+1}}N!}{i^aN^a}\sum_{j=0}^N\sigma_j\sigma_{j+\beta}\cdot\frac{1}{\sqrt{j!(N-j)!}\sqrt{(j+\beta)!(N-j-\beta)!}}\cdot\left(\frac jN\right)^{b_1}\left(1-\frac jN\right)^{b_2}\\
&&\cdot\int_0^1\left(\frac{d}{d\rho}\right)^a\left[\rho^\frac j2(1-\rho)^{\frac{N-j}{2}}\right]\rho^{\frac j2+\gamma+\frac\beta2}(1-\rho)^{\frac{N-j}{2}-\frac\beta2}\,d\rho
\end{eqnarray*}
We first estimate the following term:
\begin{lemma}
\begin{eqnarray*}
&&\left|\frac{1}{\sqrt{j!(N-j)!}\sqrt{(j+\beta)!(N-j-\beta)!}}\int_0^1\left(\frac{d}{d\rho}\right)^a\left[\rho^\frac j2(1-\rho)^{\frac{N-j}{2}}\right]\rho^{\frac j2+\gamma+\frac\beta2}(1-\rho)^{\frac{N-j}{2}-\frac\beta2}\,d\rho\right|\\
&\le&\frac{Cj^\gamma}{(N+\gamma-a+1)!N},
\end{eqnarray*}
where $C=C(\gamma,\beta,a)>0$.
\end{lemma}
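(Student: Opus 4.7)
My plan is to expand the $a$-fold derivative with the Leibniz rule, recognize each resulting integral as a Beta function sharing the common denominator $(N+\gamma-a+1)!$, and then exploit an alternating-sum cancellation to extract the extra factor of $N^{-1}$.

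First I would apply Leibniz to obtain
\[
\left(\frac{d}{d\rho}\right)^a\!\bigl[\rho^{j/2}(1-\rho)^{(N-j)/2}\bigr] = \sum_{k=0}^a\binom{a}{k}(-1)^{a-k}(j/2)_k\bigl((N-j)/2\bigr)_{a-k}\rho^{j/2-k}(1-\rho)^{(N-j)/2-(a-k)},
\]
where $(x)_k = x(x-1)\cdots(x-k+1)$ denotes the falling factorial. Multiplying by $\rho^{j/2+\gamma+\beta/2}(1-\rho)^{(N-j)/2-\beta/2}$ and integrating term by term gives a Beta function in each summand, all sharing the denominator $\Gamma(N+\gamma-a+2)=(N+\gamma-a+1)!$; this accounts for the factorial in the claim, so the task reduces to estimating the resulting alternating sum in the numerator.

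Next I would divide by $\sqrt{j!(N-j)!(j+\beta)!(N-j-\beta)!}$ and apply standard Gamma-function asymptotics, in particular $\Gamma(M+c)/\Gamma(M+c') = M^{c-c'}(1+O(M^{-1}))$ with $M\in\{j,N-j\}$, and the consequence $\sqrt{M!(M+\beta)!} = \Gamma(M+\beta/2+1)(1+O(M^{-1}))$. A careful bookkeeping reveals that the leading contribution of the $k$-th summand is
\[
\binom{a}{k}(-1)^{a-k}\cdot 2^{-a}\cdot j^\gamma,
\]
so that the sum over $k$ equals $2^{-a}j^\gamma(1-1)^a=0$ for $a\ge 1$. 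Without this cancellation one would only obtain the weaker bound $O(j^\gamma/(N+\gamma-a+1)!)$; the alternating-sum identity is precisely the mechanism that produces the extra $N^{-1}$.

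The subleading corrections from the Stirling expansions take the form $j^\gamma(Q_1(k)/j + Q_2(k)/(N-j) + O(N^{-2}))$, where $Q_1, Q_2$ are polynomials in $k$ of degree depending only on $\gamma,\beta,a$. Using the finite-difference identity $\sum_{k=0}^a\binom{a}{k}(-1)^{a-k}P(k)=0$ whenever $\deg P<a$, together with direct bounds on any residual higher-degree terms, the alternating sum of the corrections is controlled by $O(j^\gamma/N)$; after division by $(N+\gamma-a+1)!$ this yields the claimed estimate. The main obstacle I anticipate is securing uniformity in $j$: the Stirling expansions degrade when $j$ or $N-j$ is of order $1$, so those boundary regimes must be treated by direct computation, exploiting that $(j/2)_k$ vanishes identically for $k>j/2$ when $j$ is an even integer and that $\Gamma(j+c)$ admits an explicit evaluation at small integer $j$.
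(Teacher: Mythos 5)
Your proposal follows essentially the same route as the paper's proof: Leibniz expansion of the $a$-fold derivative, evaluation of each term as a Beta integral with the common denominator $(N+\gamma-a+1)!$, Gamma-function asymptotics reducing the $k$-th leading term to $\binom{a}{k}(-1)^{a-k}2^{-a}j^\gamma$, and the alternating-sum cancellation $(1-1)^a=0$ as the mechanism that yields the extra factor $N^{-1}$. Your extra remarks on finite-difference annihilation of low-degree polynomial corrections and on the boundary regimes $j,N-j=O(1)$ are refinements of the same argument (the paper simply bounds the relative errors by $O(j^{-1})+O((N-j)^{-1})$ before passing to $O(N^{-1})$), not a different approach.
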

\begin{proof}
With $a=a_1+a_2$ such that $a_1,a_2\in\N$, 
\begin{eqnarray*}
&&\frac{1}{\sqrt{j!(N-j)!}\sqrt{(j+\beta)!(N-j-\beta)!}}\int_0^1\left(\frac{d}{d\rho}\right)^a\left[\rho^\frac j2(1-\rho)^{\frac{N-j}{2}}\right]\rho^{\frac j2+\gamma+\frac\beta2}(1-\rho)^{\frac{N-j}{2}-\frac\beta2}\,d\rho\\
&=&\frac{1}{\sqrt{j!(N-j)!}\sqrt{(j+\beta)!(N-j-\beta)!}}\sum_{a_1+a_2=a}{a\choose a_1}(-1)^{a_2}\left(\frac j2\right)\left(\frac j2-1\right)\cdots\left(\frac j2-a_1+1\right)\\
&&\cdot\left(\frac{N-j}{2}\right)\left(\frac{N-j}{2}-1\right)\cdots\left(\frac{N-j}{2}-a_2+1\right)\\
&&\cdot\frac{\Gamma\left(j+\gamma+\frac\beta2-a_1+1\right)\Gamma\left(N-j-\frac\beta2-a_2+1\right)}{(N+\gamma-a+1)!}\\
&=&\frac{1}{2^a\sqrt{j!(N-j)!}\sqrt{(j+\beta)!(N-j-\beta)!}(N+\gamma-a+1)!}\\
&&\cdot\sum_{a_1+a_2=a}{a\choose a_1}(-1)^{a_2}j(j-2)\cdots\left(j-2a_1+2\right)\\
&&\cdot(N-j)(N-j-2)\cdots\left(N-j-2a_2+2\right)\cdot\Gamma\left(j+\gamma+\frac\beta2-a_1+1\right)\Gamma\left(N-j-\frac\beta2-a_2+1\right)\\
&=&\frac{1}{2^a(N+\gamma-a+1)!}\sum_{a_1+a_2=a}{a\choose a_1}(-1)^{a_2}j(j-2)\cdots\left(j-2a_1+2\right)\\
&&\cdot(N-j)(N-j-2)\cdots\left(N-j-2a_2+2\right)\cdot\frac{\Gamma\left(j+\gamma+\frac\beta2-a_1+1\right)}{\sqrt{j!(j+\beta)!}}\frac{\Gamma\left(N-j-\frac\beta2-a_2+1\right)}{\sqrt{(N-j)!(N-j-\beta)!}}.
\end{eqnarray*}
Notice that 
\begin{eqnarray*}
&&\sum_{a_1+a_2=a}{a\choose a_1}(-1)^{a_2}j(j-1)\cdots\left(j-a_1+1\right)\\
&&\cdot(N-j)(N-j-1)\cdots\left(N-j-a_2+1\right)\cdot\frac{j^\gamma\left(j-a_1\right)!}{j!}\frac{\left(N-j-a_2\right)!}{(N-j)!}\\
&=&j^\gamma\sum_{a_1+a_2=a}{a\choose a_1}(-1)^{a_2}\\
&=&0.
\end{eqnarray*}
Moreover,
$$\frac{\Gamma\left(j+\gamma+\frac\beta2-a_1+1\right)}{\sqrt{j!(j+\beta)!}}=\left[1+O\left(j^{-1}\right)\right]\frac{j^\gamma\left(j-a_1\right)!}{j!},$$
and
$$\frac{\Gamma\left(N-j-\frac\beta2-a_2+1\right)}{\sqrt{(N-j)!(N-j-\beta)!}}=\left[1+O\left((N-j)^{-1}\right)\right]\frac{\left(N-j-a_2\right)!}{(N-j)!}.$$
It the follows that
\begin{eqnarray*}
&&j(j-2)\cdots\left(j-2a_1+2\right)\cdot(N-j)(N-j-2)\cdots\left(N-j-2a_2+2\right)\\
&&-j(j-1)\cdots\left(j-a_1+1\right)\cdot(N-j)(N-j-1)\cdots\left(N-j-a_2+1\right)\\
&=&O\left(j^{a_1-1}(N-j)^{a_2}+j^{a_1}(N-j)^{a_2-1}\right).
\end{eqnarray*}
Hence,
\begin{eqnarray*}
&&\frac{1}{2^a(N+\gamma-a+1)!}\sum_{a_1+a_2=a}{a\choose a_1}(-1)^{a_2}j(j-2)\cdots\left(j-2a_1+2\right)\\
&&\cdot(N-j)(N-j-2)\cdots\left(N-j-2a_2+2\right)\cdot\frac{\Gamma\left(j+\gamma+\frac\beta2-a_1+1\right)}{\sqrt{j!(j+\beta)!}}\frac{\Gamma\left(N-j-\frac\beta2-a_2+1\right)}{\sqrt{(N-j)!(N-j-\beta)!}}\\
&=&\frac{1}{2^a(N+\gamma-a+1)!}\sum_{a_1+a_2=a}{a\choose a_1}(-1)^{a_2}\Big[j(j-1)\cdots\left(j-a_1+1\right)\\
&&\cdot(N-j)(N-j-1)\cdots\left(N-j-a_2+1\right)+O\left(j^{a_1-1}(N-j)^{a_2}+j^{a_1}(N-j)^{a_2-1}\right)\Big]\\&&\cdot\left[1+O\left(j^{-1}\right)+O\left((N-j)^{-1}\right)\right]\frac{j^\gamma\left(j-a_1\right)!}{j!}\frac{\left(N-j-a_2\right)!}{(N-j)!}\\
&=&\frac{j^\gamma}{(N+\gamma-a+1)!}\cdot\left[O\left(j^{-1}\right)+O\left((N-j)^{-1}\right)\right]\\
&&+\frac{1}{(N+\gamma-a+1)!}\cdot O\left(j^{a_1-1}(N-j)^{a_2}+j^{a_1}(N-j)^{a_2-1}\right)\cdot\frac{j^\gamma\left(j-a_1\right)!}{j!}\frac{\left(N-j-a_2\right)!}{(N-j)!}\\
&=&\frac{j^\gamma}{(N+\gamma-a+1)!}\cdot O\left(N^{-1}\right).
\end{eqnarray*}
That is,
\begin{eqnarray*}
&&\left|\frac{1}{\sqrt{j!(N-j)!}\sqrt{(j+\beta)!(N-j-\beta)!}}\int_0^1\left(\frac{d}{d\rho}\right)^a\left[\rho^\frac j2(1-\rho)^{\frac{N-j}{2}}\right]\rho^{\frac j2+\gamma+\frac\beta2}(1-\rho)^{\frac{N-j}{2}-\frac\beta2}\,d\rho\right|\\
&\le&\frac{Cj^\gamma}{(N+\gamma-a+1)!N}.
\end{eqnarray*}
\end{proof}
Following the lemma, we have that
\begin{eqnarray*}
&&\left|\eqref{eq:sum}\right|\\
&\le&\frac{N!}{N^a}\sum_{j=0}^N\left(\frac jN\right)^{b_1}\left(1-\frac jN\right)^{b_2}\\
&&\cdot\left|\frac{1}{\sqrt{j!(N-j)!}\sqrt{(j+\beta)!(N-j-\beta)!}}\int_0^1\left(\frac{d}{d\rho}\right)^a\left[\rho^\frac j2(1-\rho)^{\frac{N-j}{2}}\right]\rho^{\frac j2+\gamma+\frac\beta2}(1-\rho)^{\frac{N-j}{2}-\frac\beta2}\,d\rho\right|\\
&\le&\frac{N!}{N^a}\sum_{j=0}^N\left(\frac jN\right)^{b_1}\left(1-\frac jN\right)^{b_2}\frac{Cj^\gamma}{(N+\gamma-a+1)!N}\\
&\le&\frac{CN^{-\gamma+a-1}}{N^a}\sum_{j=0}^N\left(\frac jN\right)^{b_1}\left(1-\frac jN\right)^{b_2}\frac{j^\gamma}{N}\\
&\le&CN^{-1}\sum_{j=0}^N\left(\frac jN\right)^{\gamma+b_1}\left(1-\frac jN\right)^{b_2}\frac{1}{N}\\
&\le&CN^{-1}\int_0^1\rho^{\gamma+b_1}(1-\rho)^{b_2}\,d\rho\\
&\to&0\quad\text{as }N\to\infty.
\end{eqnarray*}
With the three cases complete, we conclude the proof of Theorem \ref{thm:sc}.


\end{document}